\newcommand{\Tcal}{\mathcal{T}}
\newcommand{\diam}{\operatorname{diam}}
\newtheorem{theorem}{Theorem}
\newtheorem{proposition}{Proposition}
\newtheorem{lemma}{Lemma}
\renewcommand{\phi}{\varphi}
\newcommand{\N}{\mathbb{N}}
\newcommand{\R}{\mathbb{R}}
\newcommand{\cT}{\mathcal{T}}
\def\ds1{\mathds{1}}
\renewcommand{\epsilon}{\varepsilon}
\newlength{\minipagewidth}
\newcommand{\beq}{\begin{equation}}
\newcommand{\eeq}{\end{equation}}
\newcommand{\beqa}{\begin{eqnarray}}
\newcommand{\eeqa}{\end{eqnarray}}
\newcommand{\beqan}{\begin{eqnarray*}}
\newcommand{\eeqan}{\end{eqnarray*}}
\def\ba#1\ea{\begin{align*}#1\end{align*}} 
\def\banum#1\eanum{\begin{align}#1\end{align}} 
\newcommand{\BlackBox}{\rule{1.5ex}{1.5ex}}  
\newenvironment{proof}{\par\noindent{\bf Proof\ }}{\hfill\BlackBox\\[2mm]}
\newenvironment{proof2}{\par\noindent{\bf Proof of the first part of theorem 2:\ }}{\hfill\BlackBox\\[2mm]}
\newenvironment{proof3}{\par\noindent{\bf Proof of the second part of theorem 2:\ }}{\hfill\BlackBox\\[2mm]}
\begin{document}
\title{On paths, stars and wyes in trees}
\author{S{\'e}bastien Bubeck \and Katherine Edwards \and Horia Mania \and Cathryn Supko}
\date{}

\maketitle

\abstract{We further the study of local profiles of trees. Bubeck and Linial showed that the set of $5$-profiles contains a certain polytope, namely the convex hull of $d$-millipedes, and they proved that the segment $[0\text{-millipede}, 1\text{-millipede}]$ corresponds to a face of the set of $5$-profiles. Our main result shows that the segment $[1\text{-millipede}, 2\text{-millipede}]$ also corresponds to a face. Surprisingly we also show that for $d \geq 4$ the segment $[d\text{-millipede}, (d+1)\text{-millipede}]$ is not a face of the set of $5$-profiles. We do so by exhibiting new trees which are generalized millipedes with intriguing patterns for their degree sequence. The plot thickens, and the set of $5$-profiles remains a mysterious convex set.}

\section{Introduction}
We study the notion of local profiles of trees introduced by Bubeck and Linial in \cite{BL13} (henceforth BL). We focus mostly on $5$-profiles: for a tree $T$ let $P(T)$ be the number of $5$-vertex paths in $T$, $S(T)$ the number of $5$-vertex stars in $T$, and $Y(T)$ the number of $5$-vertex ``wyes'' in $T$. We simply write $S, P$ and $Y$ when $T$ is clear from the context. BL proved that all trees satisfy the following linear relation between those quantities: $Y \leq 36 S + P + 4$. Our main contribution is to prove a tighter bound that was suggested in [Open problem 1, BL].
\begin{theorem} \label{th:Y9SP}
All trees satisfy
$$Y \leq 9 S + P + 6.$$
\end{theorem}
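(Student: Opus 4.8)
\emph{Step 1 (translate to the degree sequence).} The plan is to push everything onto the degrees of $T$. Each of the three $5$-vertex trees has a canonical ``root'', and counting $5$-vertex subtrees by the root gives explicit formulas: writing $d_v$ for the degree of $v$ and $f(v):=\sum_{u\sim v}(d_u-1)$ for the number of vertices at distance exactly $2$ from $v$,
\[
S=\sum_v\binom{d_v}{4},\qquad
Y=\sum_v\binom{d_v-1}{2}f(v)=\tfrac12\!\sum_{uv\in E}\!(d_u-1)(d_v-1)(d_u+d_v-4),
\]
\[
P=\sum_{\{u,w\}:\,\operatorname{dist}(u,w)=2}\!\!(d_u-1)(d_w-1)=\tfrac12\sum_v\Bigl(f(v)^2-\sum_{u\sim v}(d_u-1)^2\Bigr).
\]
For $S$ this is immediate; for $Y$ one roots a wye at its unique degree-$3$ vertex and picks the three incident edges plus the extra edge (the tree structure rules out the would-be coincidences); for $P$ one roots a $P_5$ at its middle vertex.

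\emph{Step 2 (reformulate).} Subtracting, and using $\sum_v\sum_{u\sim v}(d_u-1)^2=\sum_v d_v(d_v-1)^2$, I would rewrite Theorem~\ref{th:Y9SP} as $\,6+\sum_v\psi(v)\ge 0$, where
\[
\psi(v)=9\binom{d_v}{4}-\tfrac12 d_v(d_v-1)^2+\tfrac12 f(v)^2-\binom{d_v-1}{2}f(v).
\]
A leaf $v$ with neighbour $u$ contributes $\psi(v)=\tfrac12(d_u-1)^2\ge 0$, so leaves are harmless; the content is that the internal vertices' (typically negative) contributions cannot sum below $-6$.

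\emph{Step 3 (charge).} I would first move each leaf's $\psi$ onto its neighbour, reducing the goal to $\sum_{v:\,d_v\ge 2}\widetilde\psi(v)\ge -6$, where $\widetilde\psi(v)=\psi(v)+\tfrac12 k_v(d_v-1)^2$ and $k_v$ is the number of leaf neighbours of $v$. For fixed $(d_v,k_v)$ the quantity $\widetilde\psi(v)$ is convex in $f(v)$, and $f(v)\ge d_v-k_v$ since every non-leaf neighbour has degree $\ge 2$, giving an explicit lower bound in terms of $(d_v,k_v)$. One checks $\widetilde\psi(v)\ge 0$ whenever $d_v=2$, so only vertices of degree $\ge 3$ can be in deficit, and such a vertex is deeply in deficit only when $f(v)$ is small --- i.e.\ when all its non-leaf neighbours have small degree --- in which case those neighbours carry a compensating surplus to be charged back. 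Iterating this ``push the deficit onto low-degree neighbours'' step should collapse the sum down to the single extremal configuration, at which the residual deficit is exactly $6$.

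\emph{The main obstacle.} Step~3 is where the difficulty is concentrated, and it must be done carefully: no purely per-vertex lower bound for $\psi$ sums to $-6$ over all trees, so the argument has to exploit the coupling between a vertex's degree and its neighbours' degrees through $f$, and the charging must be tuned exactly to the extremal tree $T_0$ on $11$ vertices --- a degree-$4$ vertex bearing one pendant leaf, together with three neighbours of degree $3$ each carrying two leaves --- which satisfies $Y=27=9S+P+6$ with $S=1$, $P=12$. With both the coefficient $9$ and the constant $6$ tight there is no slack, which is what makes the bookkeeping exacting. A viable alternative to Step~3 is induction on $|V(T)|$, deleting a leaf or a pendant two-vertex path at the end of a longest path of $T$: then the deleted leaf's neighbour has at most one non-leaf neighbour, which controls $\Delta Y-9\Delta S-\Delta P$, and the remaining work becomes a case analysis over local degrees together with a short explicit list of small base trees (paths, stars, the wye, and $T_0$).
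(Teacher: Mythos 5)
Your Steps 1--2 are fine: the rooted-count identities for $S$, $P$, $Y$ and the reformulation of the theorem as $6+\sum_v\psi(v)\ge 0$ are correct (they are essentially the same local counts the paper itself uses in its bounded-degree case), and your $11$-vertex tree does satisfy $Y=27=9S+P+6$. But Step 3, which you yourself identify as carrying all the difficulty, is a plan rather than a proof: you never specify the charging rules, never prove that a degree-$\ge 3$ vertex in deficit always has neighbours holding enough surplus, and never show that the iterated ``push the deficit onto low-degree neighbours'' terminates with total deficit at least $-6$; the sentence ``should collapse the sum down to the single extremal configuration'' is exactly the missing argument. Moreover the obstacle is harder than tuning to the one finite tree $T_0$: as recalled in the paper (Section 5 of BL), there are arbitrarily large trees --- the $1$- and $2$-millipedes --- with $Y=9S+P$, so $\sum_v\widetilde\psi(v)$ is bounded by a constant on trees with unboundedly many vertices in deficit. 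Any local transfer scheme must therefore be \emph{exactly} balanced on the repeating spine patterns of these millipedes (degree-$3$ and degree-$4$ spine vertices dressed with leaves), not merely on $T_0$, while remaining nonnegative everywhere else; exhibiting such a scheme is precisely the bookkeeping you defer, and without it the theorem is not proved. Your fallback suggestion (induction deleting a leaf or a pendant path at the end of a longest path, plus a case analysis and a list of base trees) is likewise only named, not carried out.

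For contrast, the paper avoids a global discharging altogether. It first shows a minimal counterexample has no degree-$2$ vertices (using $(1,1)$- and $(2,1)$-cuts around a path of degree-$2$ vertices), then proves the case $D\le 4$ by a genuinely per-vertex inequality: the additive constant $6$ is produced by the potential $\gamma(v)=2-d'(v)$ ($d'(v)$ the number of non-leaf neighbours), which sums to $2$ over non-leaf vertices, so it suffices to check $9S(v)+P(v)-Y(v)+3\gamma(v)\ge 0$ over the finitely many local patterns with $d(v)\in\{3,4\}$ and neighbour degrees in $\{3,4\}$. Maximum degree $\ge 5$ is then handled by induction, cutting the tree around a maximum-degree vertex with $(0,0)$- or $(0,1)$-cuts and comparing the counts lost to the cut. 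If you want to salvage your approach, the realistic route is to import these two reductions (no degree-$2$ vertices, and an inductive cut for large maximum degree), at which point your $\widetilde\psi$ computation becomes a finite verification much like the paper's; as it stands, the proposal has a genuine gap at its central step.
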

This bound is optimal in the sense that for any $M \in \N$ there exists a tree $T$ with $P, S, Y \geq M$ and $Y=9 S + P$ (see [Section 5, BL]). In particular as explained in [Open problem 1, BL] this new bound characterizes one of the face of the set of $5$-profiles $\Delta_{\Tcal}(5)$ (see below for a precise definition of the set of $k$-profiles $\Delta_{\Tcal}(k)$) which corresponds to the convex hull of $1$-millipede and $2$-millipede.
Yet, perhaps surprisingly, we exhibit in Section \ref{sec:examples} new trees whose $5$-profiles are outside the convex hull of simple millipides (thus proving that [$(9)$, BL] is a strict inclusion and completing the answer to [Open Problem 1, BL]). 
\newline

We also answer positively [Open problem 3, BL] (a similar result was recently and independently obtained in \cite{CSW15}): we prove that if the proportion of $k$-vertex paths among $k$-vertex subtrees (denoted $p_1$, see below for the more precise definition) goes to zero, then the proportion of $k$-vertex stars (denoted $p_2$) goes to $1$. In fact our explicit bound given below also gives a partial answer to [Open problem 7, BL] as it is a non-linear relation between $p_1$ and $p_2$.

\begin{theorem} \label{th:fewpaths}
\label{th: FPMS}

Let $k\geqslant 6$ and $p\in \Delta_{\Tcal}(k)$, then
\[
p_2 \geqslant 1 - e(k- 1)!(k-1)^\epsilon p_1^{1 - \frac{\epsilon}{k - 1}}\text{, where }\epsilon = \frac{k-2 + \sqrt{(k-2)^2 + 4(k-2)}}{2} .
\]
If $p \in \Delta_{\Tcal}(5)$, then 
\[
p_2 \geqslant 1 - p_1 - 2\cdot 4^{2+\sqrt{3}}p_1^{\frac{2-\sqrt{3}}{4}}.
\]
\end{theorem}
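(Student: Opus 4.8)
\noindent The plan is to reduce the statement to a single combinatorial inequality about finite trees and then prove that inequality by a degree-threshold dichotomy iterated to a fixed point. Recall first that $\Delta_{\Tcal}(k)$ is the set of accumulation points, as the number of vertices tends to infinity, of the vectors recording the proportions of the isomorphism types among the connected $k$-vertex subsets of a finite tree; since each asserted inequality defines a closed set, it suffices to prove that for every $\delta>0$ every sufficiently large finite tree satisfies it up to an additive error $\delta$. (One really does need the slack: a finite tree of bounded diameter but large maximum degree already has $p_1=0$ while $p_2<1$, so only its limit lies on the boundary.) So fix a large tree $T$, write $N=N_k(T)$, $P=P_k(T)$, $S=S_k(T)$ for the numbers of $k$-subtrees, $k$-paths and $k$-stars, and put $M=N-P-S$, the number of remaining $k$-subtrees --- those that are neither a path nor a star, equivalently those that contain a vertex of degree $\ge 3$ but are not stars; for $k=5$ the only such $5$-tree is the wye, so $M=Y$. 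Since $1-p_2=(P+M)/N=p_1+M/N$ and $p_1=P/N\le p_1^{\,1-\epsilon/(k-1)}$ (the exponent lies in $(0,1)$ because $0<\epsilon<k-1$), the theorem follows once we establish, for all large trees,
\[
M \;\le\; c_k\, P^{\,1-\frac{\epsilon}{k-1}}\,N^{\,\frac{\epsilon}{k-1}} \;+\; C_k\,N^{\,1-\frac1k}, \qquad c_k:=e(k-1)!\,(k-1)^{\epsilon}-1,
\]
where the last term is the price of the small-diameter regime and is $o(N)$ since $N\ge(n-1)/(k-1)\to\infty$; for $k=5$ one reduces similarly to an estimate of the same shape for $Y$, but with the sharper exponent $\tfrac{2-\sqrt3}{4}$ and constant $2\cdot4^{2+\sqrt3}$, keeping $p_1$ separate --- this is the role of the explicit term $-p_1$ there.

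The heart is this counting inequality, which I would prove by charging each of the $M$ subtrees either to a vertex of $T$ of large degree --- which manufactures many stars --- or to a long path of $T$ --- which manufactures many copies of $P_k$ --- the trade-off being governed by a degree threshold $D\ge k$ optimized at the end. On the ``bushy'' side, if such a subtree $U$ meets a vertex $v$ with $\deg_T v\ge D$, root $U$ at its canonically chosen internal vertex of largest degree; since $U$ is not a star, at most $k-2$ of $U$'s vertices are attached directly to that vertex, so $U$ is determined by $v$, a choice of $\le k-2$ of its $T$-neighbours, and a small hanging subtree on each (a bounded number of smaller subtrees, handled by induction on $k$), and a short computation comparing with the $\binom{\deg v}{k-1}$ stars centred at $v$ --- the index drops from $k-1$ to $k-2$, costing a factor $\gtrsim\deg v\ge D$ --- bounds the bushy contribution by $O_k(S/D)=O_k(N/D)$. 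On the ``long'' side, if every vertex of $U$ has $T$-degree $<D$ then $U$ lies in the forest $F$ spanned by the low-degree vertices, and a depth-first-search encoding of a rooted subtree --- a Dyck word of semilength $j-1$ together with the list of children taken at each descent --- bounds the number of $j$-subtrees of $F$ through a fixed vertex by $4^{\,j-1}D^{\,j-1}$; the subtlety is that a bounded-degree forest can carry many such $k$-subtrees while possessing very few $k$-paths (``stars of stars''), so one must re-apply the whole dichotomy inside $F$ at a smaller threshold and iterate. The fixed point of the resulting recursion $\epsilon\mapsto(k-2)(1+\tfrac1\epsilon)$ is exactly $\epsilon=\tfrac12\big(k-2+\sqrt{(k-2)^2+4(k-2)}\big)$, the positive root of $\epsilon^2=(k-2)(\epsilon+1)$, and tracking the Catalan and composition factors accumulated through the iteration yields the constant $e(k-1)!\,(k-1)^{\epsilon}$; in the one-step idealized version one merely gets $M\le A\,D^{\,k-1}+B/D$ with $A\asymp P$ and $B\asymp N$, which optimizes to $M\lesssim_k P^{1/k}N^{1-1/k}$ and already shows the shape is right.

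For $k=5$ one runs the same scheme but exploits that the only relevant branched $5$-tree, the wye $S_{1,1,2}$, is a $4$-vertex path carrying a single pendant vertex at an inner spine vertex: working directly with this $P_4$-core shortens the recursion by one level, replacing the multiplier $k-2=3$ by the number of unit legs $k-3=2$, so the fixed point becomes the root $\epsilon=1+\sqrt3$ of $\epsilon^2=2(\epsilon+1)$ and the exponent becomes $\tfrac1{(k-1)(\epsilon+1)}=\tfrac1{4(2+\sqrt3)}=\tfrac{2-\sqrt3}{4}$, with the (now two-level) composition count contributing $2\cdot4^{2+\sqrt3}$. The step I expect to be the main obstacle is precisely the ``long'' side of the dichotomy: arranging that genuinely long-but-branched $k$-subtrees are paid for by $P_k$ rather than merely by $N_k$ forces the recursive re-thresholding whose fixed point is the quadratic in $\epsilon$, and then collapsing the accumulated constants down to the clean $e(k-1)!(k-1)^{\epsilon}$ is the other delicate point --- it is the termination of this recursion after essentially one step in the $k=5$ case that lets that case carry a visibly sharper exponent and constant.
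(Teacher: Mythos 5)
Your plan reproduces the right numerology (the fixed-point quadratic $\epsilon^2=(k-2)(\epsilon+1)$, and $2+\sqrt3$ for $k=5$), but two of its load-bearing steps do not hold as stated. First, the finite-tree inequality you reduce to,
\[
M \;\le\; c_k\,P^{\,1-\frac{\epsilon}{k-1}}N^{\,\frac{\epsilon}{k-1}}\;+\;C_k\,N^{\,1-\frac1k},
\]
is false. Take $k=6$ and the depth-two tree with a root of degree $a$ whose every child carries $\lceil\sqrt a\rceil$ pendant leaves. Its diameter is $4$, so $P_6=0$ and the first term vanishes; but the ``brooms'' consisting of the root, four children and one grandchild already give $M\ge\binom{a}{4}\cdot 4\lceil\sqrt a\rceil\asymp a^{9/2}$, while $N\asymp a^{5}$, so $C_6N^{5/6}\asymp a^{25/6}=o(M)$. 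Thus the $N^{1-1/k}$ term cannot ``pay for the small-diameter regime''; some $o(N)$ error would suffice for the limit statement, but your scheme supplies no mechanism producing one, because nowhere in the sketch does the path count $P$ actually enter the long-side estimate — the iterated re-thresholding inside the low-degree forest only ever compares against stars and against $N$. This is exactly the point where the paper does something different: it first uses a gluing construction $T_1\boxtimes_k\cdots\boxtimes_k T_n$ (with a Stolz--Ces\`aro argument showing the profile limit is unchanged) to assume $\diam(T_n)\ge 2k$, hence $P_k(T_n)\ge v_n-k$; paths then enter only through this vertex-count bound, combined with Jensen's inequality applied to the purely degree-based estimate $R_k(T)\le e(k-1)!\sum_{d_i\ge2}d_i^{\epsilon}$.

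Second, your bushy/long dichotomy is keyed to whether $U$ \emph{meets} a vertex of degree $\ge D$, but your bushy-side charge compares $U$ to the stars centred at an \emph{internal} vertex of $U$ of largest degree. These need not coincide: the only high-degree vertex of $T$ that $U$ meets may be a leaf of $U$, in which case all internal vertices of $U$ have small degree and the comparison with $\binom{\deg v}{k-1}$ yields no $1/D$ saving, while $U$ also fails to lie in the low-degree forest $F$ — such subtrees fall through the dichotomy. The paper's counting (Proposition~\ref{proposition: upperbound non-stars}) is built precisely to avoid this: the ``underlying degree'' is the maximum $T$-degree over the \emph{non-leaf} vertices of $S$, the threshold is relative ($d_r^{\alpha}$, for the vertex $v_r$ being attached in an incremental construction of $T$), and subtrees of small underlying degree are counted by growing them edge by edge from the fixed cut edge $e^*$, so leaf degrees never matter. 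So while your outline is in the same spirit as the paper's (a degree-threshold trade-off whose balance gives the quadratic in $\epsilon$), as written it has genuine gaps at the two decisive points: the treatment of the few-paths regime and the leaf/internal-vertex case split in the counting.
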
 

For sake of convenience for the reader we recall the precise definition of $k$-profiles. For (unlabelled) trees $T$, $R$, we denote by $c(R, T)$ the number of copies of $R$ in $T$, or in other words the number of injective homomorphism from $R$ to $T$. Let $T_1^k, \hdots, T_{N_k}^k$ be a list of all isomorphism types of $k$-vertex trees. The $k$-profile of a tree $T$ is the vector $p^{(k)}(T) \in \R^{N_k}$ whose $i$-th coordinate is
$$(p^{(k)}(T))_i = \frac{c(T^k_i, T)}{Z_k(T)}, \ \text{where} \ Z_k(T) = \sum_{j=1}^{N_k} c(T^k_j, T).$$
We focus on the set of $k$-profiles attainable with large trees:
$$\Delta_{\cT}(k) = \left\{p \in \R^{N_k} : \exists (T_n), |T_n| \xrightarrow[n \rightarrow \infty]{} \infty, \, \text{and} \, p^{(k)}(T_n) \xrightarrow[n \rightarrow \infty]{} p \right\} ,$$
where $|T|$ denotes the number of vertices in $T$. In the rest of the paper $D(T)$ denotes the largest degree in $T$, and $d_T(v)$ denotes the degree of vertex $v$ in $T$ (when the tree is clear from the context we drop the reference to $T$).

\section{Proof of Theorem \ref{th:Y9SP}} \label{sec:Y9SP}
Our approach is quite different from BL's proof of the weaker bound $Y \leq 36 S + P + 4$. In particular we use simple inductive arguments which avoid the pedestrian counting that BL used to rewrite $P - Y$ as a linear combination of the number of vertices of certain ``types''. On the other hand the inductive step in the BL proof relied on a special way of cutting a tree, and our proof is centered around an extension of such cuts. Precisely for $i,j \in \N$ we define the $(i,j)$-cut of a tree $T$ around $(u, v)$ (where $\{u,v\}$ is an edge) as follows:
remove the edge $\{u,v\}$, and add a path of length $i$ to $u$ and a path of length $j$ to $v$. When $u,v, i$, and $j$ are clear from the context, we denote $T_1$ and $T_2$ the two trees in the forest obtained after the $(i,j)$-cut, such that $u \in T_1$ and $v \in T_2$. The BL proof used only $(1,0)$-cuts, while we use $(0,0)$, $(1,0)$, $(1,1)$ and $(2,1)$-cuts.

The proof of Theorem \ref{th:Y9SP} proceeds in two steps: first we prove it for trees such that $D \leq 4$, and then we extend it to the general case. This is similar to what BL did, as they first proved $Y \leq 36 S + P + 4$ when $D \leq 3$ (in which case it rewrites $Y \leq P + 4$) and then extended to the general case.

We start with a simple lemma which shows that, without loss of generality, we can focus on trees without degree $2$ vertices.

\begin{lemma}\label{lem:deg2}
If there exists a tree such $Y > 9 S + P + 6$, then there exists a tree without degree $2$ vertices which also satisfy this inequality.
\end{lemma}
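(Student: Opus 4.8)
\medskip

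The plan is to show that suppressing a degree-$2$ vertex — i.e. deleting a vertex $w$ of degree $2$ and joining its two neighbours by an edge — does not decrease the quantity $Y - 9S - P$. Since such an operation strictly reduces the number of vertices, iterating it from any putative counterexample terminates at a tree with no degree-$2$ vertex, and along the way the inequality $Y > 9S + P + 6$ is preserved (the constant $6$ is untouched). So the whole lemma reduces to the single inequality: if $T'$ is obtained from $T$ by suppressing one degree-$2$ vertex, then $Y(T') - 9S(T') - P(T') \ge Y(T) - 9S(T) - P(T)$.

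\medskip

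To prove that inequality I would track how each of the three counts changes under the suppression. Let $w$ be a degree-$2$ vertex of $T$ with neighbours $a$ and $b$, and let $T' = T/w$ be the tree obtained by contracting (equivalently, smoothing) $w$, so that $|T'| = |T| - 1$ and $a,b$ become adjacent in $T'$. There is a natural near-bijection between $5$-vertex subtrees of $T$ and of $T'$: a copy $R$ of a $5$-vertex tree in $T$ either avoids $w$ (then it maps to a copy in $T'$, possibly of a different isomorphism type only if it uses the edge $ab$-path through $w$), or it contains $w$. The key observations I expect to need are: (i) $S(T') \le S(T)$, because a $5$-star centred at $a$ or $b$ can only lose a leaf when $w$ is removed, and a degree-$2$ vertex is never the centre of a $5$-star nor increases anyone's degree; in fact $S$ can only go down or stay the same, which is the ``good'' direction since $S$ has a negative coefficient — wait, $S$ appears with coefficient $-9$, so I actually need $S(T') \le S(T)$ to help, and that is exactly what holds. (ii) For paths and wyes I would argue that $P(T) + $ (wye-like contributions through $w$) and $Y(T)$ change in a controlled way; the cleanest route is to classify the $5$-vertex subtrees of $T$ that contain $w$ by the degree of $w$ within the subtree ($1$ or $2$) and by whether both $a$ and $b$ lie in the subtree, and to exhibit for each such subtree a corresponding subtree of $T'$ obtained by deleting $w$ and adding a pendant vertex at an appropriate end. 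Since $w$ has degree $2$, it never lies at a branch vertex of any subtree, so no subtree containing $w$ is a star or a wye-centre; this forces all the ``lost'' subtrees through $w$ to be paths or to map cleanly to subtrees of the same or simpler type in $T'$, which is what makes $Y - P$ behave well.

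\medskip

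The main obstacle, I expect, is the bookkeeping of boundary cases: $5$-vertex subtrees that contain $w$ but not both of $a,b$ (so $w$ is a leaf of the subtree), subtrees where suppressing $w$ would create a multi-edge or coincide with an already-counted subtree of $T'$, and making sure the map from subtrees-of-$T$-through-$w$ to subtrees-of-$T'$ is injective so that nothing is double-counted. A convenient way to sidestep much of this is to use the $(0,0)$-cut machinery promised in the preceding text: cutting the edge $\{a,w\}$ (or applying a $(0,0)$-cut around $(w,b)$) splits off the trivial contribution of $w$ and lets one compare $T$ with $T'$ through a forest whose profile counts are additive up to low-order boundary terms. Either way, once every class of $w$-containing subtree is matched with a subtree of $T'$ of equal or more favourable type, summing the contributions yields $Y(T') - 9S(T') - P(T') \ge Y(T) - 9S(T) - P(T)$, and the lemma follows by induction on the number of degree-$2$ vertices.
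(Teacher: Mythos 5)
There is a genuine gap: your entire reduction rests on the claim that suppressing a degree-$2$ vertex never decreases $Y-9S-P$, and that claim is false. Suppressing a degree-$2$ vertex leaves all other degrees (hence $S$) unchanged, and it does increase $Y$, but it can increase $P$ by much more, because it shortens distances across the suppressed vertex and thereby creates many new $5$-vertex paths whose endpoints sit in high-degree neighbourhoods. Concretely, take $v$ and $w$ of degree $3$ joined by a path $v$--$u$--$w$ with $d(u)=2$, let the other two neighbours $x_1,x_2$ of $v$ and $y_1,y_2$ of $w$ each have degree $D$ with all their remaining neighbours leaves. Suppressing $u$ gives $S(T')=S(T)$ and $Y(T')-Y(T)=2$, but a direct count gives $P(T)=2(D-1)^2+4(D-1)+4$ and $P(T')=2(D-1)^2+8(D-1)$, so $P(T')-P(T)=4D-8$ and $Y(T')-9S(T')-P(T')$ drops by $4D-10>0$ for every $D\geq 3$. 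So the smoothed tree need not remain a violator, and your induction on the number of degree-$2$ vertices cannot get started; nothing in your subtree-matching sketch addresses the new ``cross'' paths through the created edge, which are exactly the dominant term.

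The paper's proof goes the other way around. It takes a \emph{minimal} counterexample $T$ and contracts a maximal degree-$2$ path between branch vertices $v,w$ to get a smaller tree $T'$; since $T'$ is smaller, minimality forces $Y(T')\leq 9S(T')+P(T')+6$, and comparing the exact change in $Y$ with the upper bound on the change in $P$ yields the balance inequality~(\ref{eq:balance}), i.e.\ the neighbours of $w$ outside the path must have large total degree. That structural information is then fed into a separate argument: a $(2,1)$-cut (when $d(v)\geq 4$ or $d(w)\geq 4$) or a $(1,1)$-cut (when $d(v)=d(w)=3$) around the relevant edge splits $T$ into two trees to which the bound applies, and in one residual subcase a leaf deletion contradicts minimality directly. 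In other words, the failure of your monotonicity is not an obstacle the paper overcomes by better bookkeeping; it is the very hypothesis (\ref{eq:balance}) that the cut analysis then refutes. To repair your proposal you would have to abandon the monotonicity claim and introduce both the minimality assumption and a cut-type argument, which is essentially the paper's proof.
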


\begin{proof}
Assuming that there exists trees with $Y > 9 S + P + 6$, let $T$ be the smallest such tree. We will now show by contradiction that $T$ cannot have degree $2$ vertices. Let us assume that it does. Then there exist vertices $v$ and $w$, each with degree at least $3$ that are joined by a path $P$ of length at least $2$, all of whose internal vertices have degree $2$.
Let the tree $T'$ be obtained from $T$ by replacing $P$ with a single edge $vw$.
Clearly $S(T') = S(T)$.
We also have $$Y(T') = Y(T) + \binom{d(v)-1}{2}(d(w) - 2) + \binom{d(w) - 1}{2}(d(v) - 2)$$
and $$P(T') \leq P(T) + \sum_{x\sim v \atop x\notin V(P)}(d(x)-1)(d(w) - 2) + \sum_{x\sim w \atop x\notin V(P)}(d(x) - 1)(d(v) - 2).$$
By the minimality of $T$ we have $Y(T') \leq 9S(T') + P(T') + 6$, and so $P(T') - P(T) > Y(T') - Y(T)$.
In particular we may assume that
\begin{equation}\label{eq:balance}
\sum_{x\sim w \atop x\notin V(P)}(d(x)-1) > \binom{d(w) - 1}{2}.
\end{equation}
We now consider two cases:

\noindent\textbf{Case 1:} $d(v)\geq 4$ or $d(w)\geq 4$.\\
We consider the $(2,1)$-cut around $(v,w)$. Observe first that $S(T) = S(T_1) + S(T_2)$.
Now 
$$Y(T) - Y(T_1) - Y(T_2) = \binom{d(w)-1}{2}.$$
Further 
$$P(T) - P(T_1) - P(T_2) \geq (d(v)-1) + (d(w) - 1) +  \sum_{x\sim w \atop x\notin V(P)}(d(x) - 1).$$
Therefore 
\begin{align*}
Y(T) &\leq Y(T_1) + Y(T_2) + \binom{d(w)-1}{2}\\
 &\leq 9S(T) + P(T) + 12 + \binom{d(w)-1}{2} - (d(v)-1) - (d(w) - 1) - \sum_{x\sim w\atop x\notin V(P} (d(x) - 1).
\end{align*}
The lemma follows from (\ref{eq:balance}).

\noindent\textbf{Case 2:} $d(v) = d(w) = 3$.\\
We consider the $(1,1)$-cut around $(v,w)$.
Again $S(T) = S(T_1) + S(T_2)$.
Now we have  
$$Y(T) - Y(T_1) - Y(T_2) = 2$$
and 
\begin{align*}
& P(T) - P(T_1) - P(T_2) \\
& \geq (d(v)-1) + (d(w) - 1) + \sum_{x\sim v \atop x\notin V(P)}(d(x) - 1) +  \sum_{x\sim w \atop x\notin V(P)}(d(x) - 1) \\
& \geq 6 + \sum_{x\sim v \atop x\notin V(P)}(d(x) - 1) ,
\end{align*}
where the last inequality used (\ref{eq:balance}). Now observe that if 
$$\sum_{x\sim v \atop x\notin V(P)}(d(x) - 1) \geq 2, \ \text{or} \ \sum_{x\sim v \atop x\notin V(P)}(d(x) - 1) = 0 ,$$
then we are done. On the other other hand if this sum is equal to $1$, then $v$ is adjacent to two vertices of degree $2$ and one leaf $x$.
But then consider $T'' = T\setminus x$.
We have $Y(T'') = Y(T) - 2$ and $P(T'') \leq P(T) - 2$, contradicting the minimality of $T$.
This completes the proof of the lemma.
\end{proof}

\begin{proposition}
If $D \leq 4$, then $Y \leq 9 S + P + 6$.
\end{proposition}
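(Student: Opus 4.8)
The plan is to argue by contradiction. Suppose some tree with $D \le 4$ violates $Y \le 9S + P + 6$, and let $T$ be a smallest such tree. Re-running the argument of Lemma~\ref{lem:deg2} — noting that contracting a path of degree-$2$ vertices never alters any other degree, so the property $D \le 4$ is preserved, and taking minimality among all trees with $D \le 4$ — shows that $T$ has no vertex of degree $2$. Hence every vertex of $T$ has degree $1$, $3$, or $4$, and in particular $S$ equals the number of degree-$4$ vertices. Since single edges, and stars $K_{1,m}$ with $m \le 4$, all satisfy the bound ($P = Y = 0$ and $S \le 1$ there), $T$ has at least two non-leaf vertices.

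The non-leaf vertices of $T$ span a subtree (a path between two non-leaf vertices has only internal vertices, which have degree $\ge 3$, hence are non-leaves), which therefore has a leaf; let $v$ be such a leaf of it and $u$ its unique non-leaf neighbour, so that $v$'s other $m := d(v) - 1 \in \{2, 3\}$ neighbours are leaves and $d(u) \in \{3, 4\}$. I would then apply the $(1,1)$-cut around $\{u, v\}$: the component $T_1$ containing $v$ is the star $K_{1, d(v)}$, for which $P(T_1) = Y(T_1) = 0$ and $S(T_1) = \mathbf{1}[d(v) = 4]$, while the component $T_2$ containing $u$ is $T$ with $v$'s claw removed and a pendant leaf re-attached at $u$; it satisfies $D(T_2) \le 4$ and $|T_2| = |T| - m < |T|$, so by minimality of $T$ it obeys $Y(T_2) \le 9 S(T_2) + P(T_2) + 6$.

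The cut only creates vertices of degree $\le 2$ and preserves the degrees of $u$ and $v$, so $S(T) = S(T_1) + S(T_2)$. Comparing $T$ with $T_2$ — that is, comparing the pendant leaf at $u$ in $T_2$ with the claw it is replaced by in $T$ — a short computation in which almost all wye- and path-contributions cancel gives
\[
Y(T) - Y(T_2) = \binom{m}{2}\bigl(d(u) - 1\bigr) + m\binom{d(u) - 1}{2}, \qquad P(T) - P(T_2) = m \sum_{\substack{z \sim u \\ z \ne v}}\bigl(d(z) - 1\bigr).
\]
Writing $B := \sum_{z \sim u,\, z \ne v}(d(z) - 1) \ge 0$ and combining these with $S$-additivity and the bound for $T_2$, one gets $Y(T) \le 9 S(T) + P(T) + 6$ — contradicting the choice of $T$ — as soon as
\[
\binom{m}{2}\bigl(d(u) - 1\bigr) + m\binom{d(u) - 1}{2} \;\le\; 9\cdot\mathbf{1}[d(v) = 4] + mB .
\]

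Checking the four possibilities for $(m, d(u)) \in \{2,3\} \times \{3,4\}$, this inequality holds automatically except when $B$ is small, i.e. when all or nearly all of $u$'s neighbours other than $v$ are leaves. In each of those exceptional configurations I would either observe that $u$'s few non-leaf neighbours must themselves be leaves of the non-leaf subtree — hence claws — so that $T$ is one of finitely many small trees, checked by hand; or else switch to a different cut from the toolbox ($(2,1)$, $(1,0)$ or $(0,0)$) around $\{u, v\}$ or around a more convenient edge, or delete a single leaf of $v$ and invoke minimality directly, exactly as in the fallback step of Lemma~\ref{lem:deg2}. I expect this last part — enumerating precisely the configurations in which the $(1,1)$-cut alone does not close the argument, and verifying that the replacement cuts keep the additive constant $6$ intact (charging it to a single component of the forest) — to be the main obstacle.
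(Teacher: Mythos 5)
Your reduction to trees without degree-$2$ vertices, the choice of a vertex $v$ all of whose neighbours but one are leaves, the difference formulas $Y(T)-Y(T_2)=\binom{m}{2}(d(u)-1)+m\binom{d(u)-1}{2}$ and $P(T)-P(T_2)=mB$, and the resulting target inequality $\binom{m}{2}(d(u)-1)+m\binom{d(u)-1}{2}\le 9\cdot\mathbf{1}[d(v)=4]+mB$ are all correct; this is a genuinely different route from the paper, which proves the proposition without induction at all, by a local argument (charging $9S(v)+P(v)-Y(v)+3\gamma(v)$ to each non-leaf vertex $v$, where $\gamma(v)=2-d'(v)$ sums to $2$, and verifying nonnegativity for $d(v)\in\{3,4\}$ with neighbour degrees in $\{3,4\}$ — a finite check). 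But your argument has a genuine gap exactly where you flag "the main obstacle": the exceptional configurations are not a finite list of small trees, and the fallbacks you name do not obviously close them.

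Concretely, take $m=2$ (so $d(v)=3$) and $d(u)=4$. The target inequality reads $9\le 2B$, i.e.\ $B\ge 5$, while deleting a leaf of $v$ and invoking minimality requires $\binom{3}{2}+3\le B$, i.e.\ $B\ge 6$; so every configuration with $B\le 4$ is uncovered by both tools. Your claim that in such cases the non-leaf neighbours of $u$ "must themselves be leaves of the non-leaf subtree, hence claws, so that $T$ is one of finitely many small trees" is false: $u$ may have one or two non-leaf neighbours of degree $3$ or $4$ attached to an arbitrarily large tree (e.g.\ $u$ adjacent to $v$, two leaves, and a degree-$4$ vertex gives $B=3$). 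Worse, one can build arbitrarily large trees in which \emph{every} admissible choice of $v$ lands in this bad case — for instance a caterpillar whose spine alternates degree-$4$ vertices (each carrying one pendant leaf and one pendant degree-$3$ vertex with two leaves) and degree-$3$ vertices (each carrying one pendant leaf): every candidate $v$ there has $(m,d(u),B)=(2,4,4)$. Such trees are of course not counterexamples, but your induction cannot derive the contradiction for a hypothetical minimal counterexample of this shape, so the deferred step is not a routine enumeration; it requires new cuts (the paper's $(2,1)$- or $(0,0)$-cuts around other edges) together with careful bookkeeping of the additive constant $6$ across the two components, and as written the proof is incomplete.
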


\begin{proof}
We say that a $5$-vertex subtree $S$ of $T$ is centered at $v\in V(T)$ if $v\in V(S)$ and $S$ is a star and $v$ has degree $4$ in $S$; if $S$ is a path and $v$ is the middle vertex; or if $S$ is a wye and $v$ has degree $3$ in $S$.
For each vertex $v\in V(T)$ denote by $S(v), P(v) $ and $Y(v)$ the number stars, paths and wyes, resp., centred at $v$.
Let us also define $\gamma(v) = 2 - d'(v)$, where $d'(v)$ denotes the number of non-leaf neighbours of $v$. Observe that $ \sum_{v \ \text{non-leaf}}\gamma(v) = 2$. Thus is enough to show that $9 S(v) + P(v) - Y(v) + 3 \gamma(v) \geq 0$ for any non leaf-vertex $v$. Recall also the formulas
$$S(v) = \binom{d(v)}{4}$$
and 
$$P(v) = \sum_{u,w \sim v}(d(u) - 1)(d(w) - 1) $$
and 
$$Y(v) = \sum_{u \sim v}(d(u) - 1)\binom{d(v) - 1}{3}. $$
We now consider two cases:

\noindent\textbf{Case 1:} $d(v) = 4$.\\
It suffices to verify that for each $k\leq 4$ and every choice of $x_1,\dots,x_k \in \{2,3\}$ we have
$$ \sum_{i=1}^{k}3(x_i + 1) - 1 \leq 15 + \sum_{1\leq i < j \leq k}x_ix_j ,$$
which is indeed true.

\noindent\textbf{Case 2:} $d(v) = 3$.\\
It suffices to verify that for each $k\leq 3$ and every choice of $x_1,\dots,x_k \in \{2,3\}$ we have
$$ \sum_{i=1}^{k}(x_i + 3) -1 \leq 6 + \sum_{1\leq i < j \leq k}x_ix_j.$$
which is again true.
\end{proof}

We can now move to the proof of Theorem \ref{th:Y9SP}.

\begin{proof} We prove the inequality by induction. Let $u$ be a vertex of maximal degree in $T$, such that at most one ot its neighbors has degree $D(T)$ (clearly such a vertex exist). Let $v$ be a neighbor of $u$ of maximal degree. Denote $a=d(v)$ and $d=D(T)$. We will either consider the $(0,0)$-cut or the $(0,1)$-cut of $T$ around $(u,v)$, and then apply the induction hypothesis on $T_1$ and $T_2$, i.e., $Y(T_i) \leq 9 S(T_i) + P(T_i) + 6$. Thus we have to show that the number of paths removed (denoted $\delta_P$), plus nine times the number of stars removed (denoted $\delta_S$), is at least six plus the number of wyes removed (denoted $\delta_Y)$.

In the case of a $(0,0)$-cut one has:
\begin{align*}
\delta_S &= 9{d\choose 4} - 9{d-1 \choose 4} + 9{a\choose 4} - 9{a-1 \choose 4} \\
&= \frac{3}{2}(d-1)(d-2)(d-3) + \frac{3}{2}(a-1)(a-2)(a-3) , 
\end{align*}
and
\begin{align*}
\delta_P & \geq (a-1) \sum_{\substack{w\sim u\\w \neq v}} (d(w) - 1) + (d-1) \sum_{\substack{w \sim v\\w \neq u}}(d(w) - 1) ,
\end{align*}
and
\begin{align*}
\delta_Y &= {a- 1\choose 2}(d-1) + {d - 1\choose 2}(a - 1) +\\ &+ \sum_{\substack{w\sim u\\ w\neq v}}{d(w) - 1 \choose 2} + (d-2)\left(\sum_{\substack{w \sim u \\ w \neq v}} d(w) - 1\right) +
\sum_{\substack{w\sim v\\ w\neq u}}{d(w) - 1 \choose 2} + (a-2)\left(\sum_{\substack{w \sim v \\ w \neq u}} d(w) - 1\right) 
\end{align*}
while for the $(0,1)$-cut:
\begin{align*}
\delta_S &= \frac{3}{2}(d-1)(d-2)(d-3) ,
\end{align*}
and
\begin{align*}
\delta_Y &= {a- 1\choose 2}(d-1) + {d - 1\choose 2}(a - 1) + \sum_{\substack{w\sim u\\ w\neq v}}{d(w) - 1 \choose 2} + (d-2) \sum_{\substack{w \sim u \\ w \neq v}} (d(w) - 1) ,
\end{align*}
and the same lower bound (as for the $(0,0)$-cut) on $\delta_P$ holds. 

We now consider three cases. Recall that by the choice of $u$ and $v$ we know that for $w \sim u$,$d(w) \leqslant \min{\{a, d - 1\}}$ and for $w \sim v$, $d(w) \leqslant d$. 

\noindent 
\textbf{Case 1: $a=d$.} \\
We use the $(0,0)$-cut. It is enough to prove that
\begin{align*}
\label{to check}
3(d-1)(d-2)(d-3) &\geqslant 2{d- 1\choose 2}(d-1) +\\&+ \frac{1}{2}(d-1)^2(d-4) + \frac{1}{2}(d-1)(d-2)(d-5) + 6.
\end{align*}
This is equivalent to checking $d^2 - 6d + 9 \geqslant \frac{6}{d-1}$, which holds for $d\geq 5$.

\noindent 
\textbf{Case 2: $a = d- 1$.}\\
We use the $(0,0)$-cut. It is enough to prove that
\begin{align*}
&\frac{3}{2}(d-1)(d-2)(d-3) + \frac{3}{2}(d-2)(d-3)(d-4) \\ &\geq {d-2\choose 2}(d-1) + {d - 1\choose 2}(d-2) + (d-1){d-2\choose 2} +\frac{1}{2}(d-2)(d-1)(d-6) + 6. 
\end{align*}
This is equivalent to checking that $2d^2 - 15d + 31 \geqslant \frac{12}{d-2}$, which holds for $d\geq 5$. 

\noindent 
\textbf{Case 3: $a\leq d- 2$.}\\
We use the $(0,1)$-cut. It is enough to prove that
\begin{align*}
& \frac{3}{2}(d-1)(d-2)(d-3) + (a-1) \sum_{\substack{w \sim u\\ w \neq v}}(d(w) - 1) \\
&\geq  {a- 1\choose 2}(d-1) + {d - 1\choose 2}(a - 1) +
 \sum_{\substack{w \sim u\\ w \neq v}}{d(w) - 1 \choose 2} + (d-2) \sum_{\substack{w \sim u \\ w \neq v}} (d(w) - 1) + 6
\end{align*}

Recall that for any $w \sim u$, $d(w) \leqslant a$, then it is enough to check
\begin{align*}
& \frac{3}{2}(d-1)(d-2)(d-3) \\
& \geq  {a- 1\choose 2}(d-1) + {d - 1\choose 2}(a - 1) + (d-1){a - 1\choose 2} + (d - a - 1)(d-1)(a-1) + 6 \\
& = {d - 1\choose 2}(a - 1) + (d-3)(d-1)(a-1) + 6.
\end{align*}
The last term in the above display is maximized for $a = d-2$, and thus it is enough to check
\[
\frac{3}{2}(d-1)(d-2)(d-3) \geq  (d-3)(d-4)(d-1) + {d - 1\choose 2}(d - 3) + (d-1)(d-3) + 6.
\]
This inequality is equivalent to $1\geq \frac{6}{(d-1)(d-3)}$ which holds for $d\geq 5$.
\end{proof}

\section{Some profiles outside the convex hull of simple millipedes} \label{sec:examples}

\begin{figure}[ht]
	\centering
	\scalebox{0.5}{\input{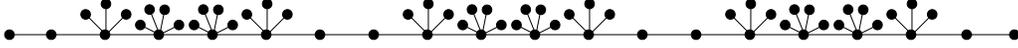}}
	\caption{A $(0,0,3,4,4,3)$-millipede.}
	\label{fig:milli}
\end{figure}

In this section we give a negative answer to the question posed in BL's open problem 1.
Recall that we denote by $\mathcal P(k)$ the projection of $\Delta(k)$ onto the first two coordinates (those corresponding to paths and stars on $k$ vertices).

Let $D = (d_1,\dots, d_{\ell})$ be a finite sequence of integers. 
We say a graph $T$ is a $D$-millipede if $T$ consists of a path on vertices $v_1,\dots,v_n$, along with $d_{i} + 2$ pendant vertices at each vertex $v_j$ with $j = i (\textrm{mod } \ell)$ (see Figure \ref{fig:milli}). We say that the $D$-millipede has length $n$.
For a fixed sequence $D$, we write $T_n^D$ to denote the $D$-millipede of length $n$.

\begin{figure}[t]
\begin{center}
\includegraphics[scale=0.8]{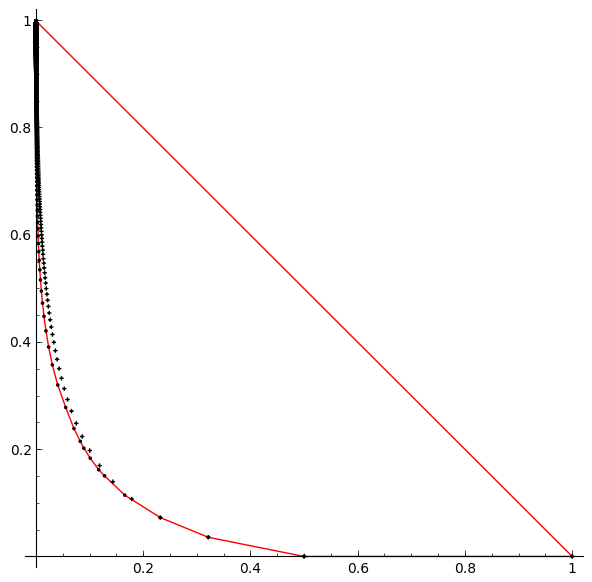}
\end{center}\caption{Points marked with `+' denote limiting profiles of $(d)$-millipedes. The red curve bounds the convex hull of a larger set of profiles.}\label{fig:counterexamples}
\end{figure}

BL asked whether all points in $\mathcal P(5)$ lie inside the convex hull of profiles of the sequences $(T_n^{(d)})$ of $(d)$-millipedes for $i\geq 0$.
In fact $\mathcal P(5)$ contains the convex hull of the limiting $5$-profiles corresponding to the following sequences of millipedes:
\begin{itemize}
\item $(d)$-millipedes; $0\leq d \leq 3$
\item $(0,0,3,4,4,3)$-millipede
\item $(0,0,d,d+2,d+2,d)$-millipedes; $3\leq d \leq 5$
\item $(0,0,d,d+1,d)$-millipedes; $4\leq d \leq 6$
\item $(0,0,d,d)$-millipedes; $d\geq 6$
\end{itemize}

This set strictly contains the limiting $5$-profiles of $(d)$-millipedes.
It is straightforward to compute the limiting profiles of the sequences $(T_n^D)$ for each of the sequences describes above. 
We omit these tedious calculations and instead direct the reader's attention to Figure \ref{fig:counterexamples} for comparison.

In order to prove that the red curve in Figure \ref{fig:counterexamples} is tight, it would be sufficient to prove that each
of an (infinite) sequence of linear inequalities holds for every tree. 
The first inequality is our Theorem \ref{th:Y9SP}, that $Y\leq 9S + P + 6$.
The next few inequalities would be $Y \leq \tfrac{144}{29} S +  \tfrac{42}{29} P + k_1$, then $Y \leq \tfrac{624}{175} S +  \tfrac{66}{35} P + k_2$ and $Y\leq \tfrac{107}{40} S +  \tfrac{5}{2} P + k_3$ (for universal constants $k_i$).


\section{Few paths implies many stars} \label{sec:fewpaths}

This section is dedicated to the proof of Theorem \ref{th: FPMS}. This result offers a lower-bound on how fast the proportion of stars in a sequence of trees goes to one as the proportion of paths in the sequence goes to zero.

The key idea in the proof of Theorem \ref{th: FPMS} is to show that the number of non-star subtrees in a tree $T$ is $\mathcal{O}\left(\sum d_i^\epsilon \right)$ for some $\epsilon < k-1$, where $d_1, d_2,..., d_{|T|}$ is the degree sequence of $T$. If we can prove such a bound on the number of non-star $k$ subtrees $R_k(T)$, the theorem follows because $S = \Omega\left(\sum d_i^{k-1} \right)$. Why should one expect that there exists such a bound on $R_k(T)$? There are at most $(k-1)!D(T)^{k-1}$ subtrees with $k$ nodes that contain a fixed vertex of maximum degree. Then an inductive argument yields an upper bound with $\epsilon  = k-1$, but this bound is to weak for our purposes. If all the non-leaf nodes of $T$ have the same degree, by the same method we obtain a bound with $\epsilon = k -2$ by removing one at a time nodes $v$ with $d_v - 1$ neighboring leafs. The next proposition finds a middle ground. 

\begin{proposition}
\label{proposition: upperbound non-stars}
Let $T$ be a tree with degree sequence $d_1$, $d_2$,..., $d_n$. Then
\[
R_k(T) \leqslant e(k-1)!\sum_{d_i \geqslant 2} d_i^{\epsilon}, \text{ where } \epsilon = \frac{k-2 + \sqrt{(k-2)^2 + 4(k-2)}}{2} .
\]
\end{proposition}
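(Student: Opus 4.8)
The plan is to prove the bound $R_k(T)\leqslant e(k-1)!\sum_{d_i\geqslant 2}d_i^{\epsilon}$ by induction on the number of vertices of $T$, using a carefully chosen vertex to peel off. The key observation motivating the choice of $\epsilon$ is that $\epsilon$ is the positive root of the quadratic $\epsilon^2-(k-2)\epsilon-(k-2)=0$, equivalently $\epsilon(\epsilon-(k-2))=k-2$, equivalently $(k-2)(1+\epsilon)=\epsilon^2$. I expect this algebraic identity to be exactly what makes the induction close, so the first thing I would do is record it and figure out which inductive step it balances.

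For the inductive step I would pick a leaf-most non-leaf vertex: let $v$ be a non-leaf vertex all of whose neighbours are leaves except for (at most) one, call it $u$ — such a $v$ exists by looking at the tree whose vertices are the non-leaves of $T$ and taking a leaf of that tree. Write $d=d_v$, so $v$ has $d-1$ pendant leaves and the single non-leaf neighbour $u$. Form $T'$ by deleting $v$ together with its $d-1$ pendant leaves. Every $k$-vertex subtree $R$ of $T$ that is not a star either (i) lives entirely in $T'$, or (ii) uses $v$. For case (ii) I would bound the number of non-star $k$-subtrees through $v$: such a subtree is determined by choosing how many of $v$'s pendant leaves it uses and a subtree on the "$u$-side", so the count is at most something like $(k-1)!\,d^{\,k-2}$ rather than $d^{\,k-1}$, because any $k$-subtree through $v$ that uses $v$'s high degree and extends toward $u$ uses at most $k-2$ of the $d-1$ leaves (if it used $d-1\geq k-1$ of them it would have to be contained in the star at $v$, hence a star, hence excluded) — more carefully, a non-star subtree through $v$ has at least one edge not incident to $v$, which costs a vertex, leaving at most $k-2$ slots for leaves of $v$. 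Combining via the induction hypothesis on $T'$ gives $R_k(T)\leqslant e(k-1)!\sum_{d_i\geqslant 2,\,i\neq v}d_i^{\epsilon} + (\text{at most } e(k-1)!\text{-ish})\cdot d^{\,k-2}$, and there may also be a correction because deleting $v$'s leaves drops $d_u$ by $1$, which I would handle by noting $d_u^\epsilon-(d_u-1)^\epsilon\geq 0$ so the term for $u$ only helps.

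The crux is then the purely numerical inequality that must hold for every integer $d\geq 2$:
\[
(\text{constant})\cdot d^{\,k-2}\;\leqslant\; e(k-1)!\,d^{\epsilon},
\]
i.e. one needs $d^{k-2}\leqslant c\, d^{\epsilon}$ after tracking the constants — but since $\epsilon<k-1$ and we only have $\epsilon$ possibly less than $k-2$ is false ($\epsilon>k-2$), the exponent on the left is actually \emph{smaller} than $\epsilon$ only when... here is where the real work is: one does not bound the case-(ii) count by $d^{k-2}$ times a constant, but rather one must recursively expand \emph{within} the cluster at $v$, using that the subtrees through $v$ that reach toward $u$ contribute $d^{k-2}$ while the induction on $T'$ already accounts for $d_u$; the identity $(k-2)(1+\epsilon)=\epsilon^2$ is what lets $d^{k-2}$ be absorbed into the difference between the "full" bound $d^\epsilon$ and what a naive two-step peel would give. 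Concretely I would aim to show $(k-1)!\,d^{k-2}+e(k-1)!(d-1)^{\epsilon}\cdot[\text{something}]\leqslant e(k-1)!\,d^{\epsilon}$, reducing to an inequality of the form $d^{k-2}\leqslant (e-1)d^{\epsilon}$ or similar, valid for $d\geq 2$ precisely because $\epsilon\geq k-2$ and the constant $e-1>1$ dominates the $d=2$ base case; verifying this elementary inequality for all integers $d\geq2$ — in particular pinning down why the constant $e$ (rather than a smaller one) is needed, presumably from summing a geometric-type series $\sum_j (k-2)!/j!$-style overcount across the peeling — is the main obstacle and the step I would spend the most care on.
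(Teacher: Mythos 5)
There is a genuine gap, and it sits exactly where you flagged ``the main obstacle.'' The central step of your plan --- bounding the number of non-star $k$-subtrees through the peeled vertex $v$ by a quantity depending only on $d=d_v$, such as $(k-1)!\,d^{k-2}$ --- is false. A non-star $k$-subtree containing $v$ must reach beyond $u$, and its count is governed by the degrees of $u$ and of the other vertices within distance $k-2$ of $v$, which can be arbitrarily large compared with $d_v$: for $k=5$, take $v$ with one pendant leaf and $u$ of huge degree $D$; there are about $(d-1)\binom{D-1}{2}$ wyes through $v$, which no function of $d_v$ alone controls, so these subtrees cannot be absorbed into $e(k-1)!\,d_v^{\epsilon}$ after deleting $v$ and its leaves. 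Relatedly, your proposed crux inequality $d^{k-2}\leqslant (e-1)d^{\epsilon}$ is trivially true since $\epsilon>k-2$; if the induction really closed that way, the proposition would hold with exponent $k-2$, which is false --- the paper's Proposition \ref{prop: y bound} (depth-$2$ trees with root degree $d$ and children of degree about $d^{\alpha}$) shows that for $k=5$ no bound $C\sum_i d_i^{\beta}$ with $\beta<2+\sqrt{3}$ can hold, and $2+\sqrt{3}>3=k-2$. So the defining quadratic of $\epsilon$ must enter the counting quantitatively, whereas in your sketch it appears only as a hope that the excess ``absorbs.''

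The missing idea, and the way the paper proceeds, is a two-regime charging scheme by what it calls the underlying degree. The tree is built by attaching, one vertex $v_r$ at a time, its blossom of $d_r-1$ new leaves, and one counts the non-star $k$-subtrees that first appear at step $r$. Those whose non-leaf vertices all have $T$-degree at most $d_r^{\alpha}$ number at most $(k-1)!\,d_r^{\alpha(k-2)}$ (grow them edge by edge from the edge joining $v_r$ to the rest; each of the $k-2$ extensions has at most a constant multiple of $d_r^{\alpha}$ continuations). Those containing a non-leaf vertex $v_i$ with $d_i>d_r^{\alpha}$ are charged to $v_i$: if $v_i$ is at distance $l$ from $v_r$ there are at most $\frac{(k-1)!}{l!}\,d_r\,d_i^{\,k-2-l}\leqslant \frac{(k-1)!}{l!}\,d_i^{\,k-2-l+1/\alpha}$ of them, and when this charge is summed over all steps $r$ (at most $d_i^{\,l}$ eligible $v_r$ at distance $l$), vertex $v_i$ receives at most $(k-1)!\bigl(d_i^{\alpha(k-2)}+(e-1)\,d_i^{\,k-2+1/\alpha}\bigr)$, the factor $e$ coming from $\sum_l 1/l!$. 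Balancing the two exponents via $\alpha(k-2)=k-2+1/\alpha$ is precisely your quadratic and produces $\epsilon$. Without this split and the cross-charging of high-degree configurations to the high-degree vertex, the leaf-peeling induction you outline cannot be completed.
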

\begin{proof}

Label $T$'s nodes by $v_1$, $v_2$,..., $v_n$ such that the first $m$ are all the nodes with degree at least $2$. Of course, $d_i$ denotes the degree of node $v_i$. 
We employ the following strategy. We build the tree $T$ from the empty set by adding some nodes at each step. Then we upper-bound the number of non-star trees added to the tree at each step and use these intermediate bounds to obtain the desired bound on $R_k(T)$. 

The following construction of $T$ meets our needs. Start with the tree $T_1$: a star with $d_1 + 1$ nodes, centered at $v_1$. Then, at each step, to construct $T_{r+1}$ from $T_r$ we choose a leaf node, which we label $v_{r+1}$, and attach to it $d_{r+1} - 1$ nodes in order to transform $v_{r+1}$ into a node of degree $d_{r+1}$.  Through appropriate choices of leafs at each step and maybe a relabeling of the nodes we can construct $T_m = T$. 

Denote by $S_r$ the set of non-star $k$-subtrees of $T_r$ that contain at least one of the leafs added at step $r$. Hence, $R_k(T) = \sum_{r = 2}^m |S_r|$ because $S_1 = \emptyset$. To upper-bound $|S_r|$ for all $r\geq 2$ we estimate the number of subtrees $S\in S_r$ based on the degrees of their nodes when viewed as nodes of $T$. 

We denote by $U(S)$ the maximum degree of a non-leaf node of the $k$-subtree $S$ when viewed as a node of $T$. We refer to the quantity $U(S)$ as the underlying degree of $S$ in $T$. Now, for some $\alpha >0$ to be chosen later, the sets $S_r$ can be written as a disjoint union: \[S_r = \{S\in S_r\colon U(S) \leqslant d_r^\alpha\}\bigsqcup \left(\bigsqcup_{u > d_r^\alpha} \{S\in S_r\colon U(S) = u\}\right).\] 

First, we bound the number of subtrees in $S_r$ with underlying degrees at most $d_r^\alpha$. Since each such subtree is not a star, it contains the edge connecting $v_r$ to $T_{r-1}$. Fix this edge and call it $e^*$. We upper-bound the number of $k$-subtrees with underlying degree at most $d_r^\alpha$ that can be obtained by starting with $e^*$ and adding new edges one at a time. Therefore, $k-2$ edges need to be added in order to obtain a $k$-subtree. If any of the endpoints of $e^*$ has degree in $T$ larger than $d_r^\alpha$, then there are no $S\in S_r$ with $U(S) \leq d_r^\alpha$. On the other hand, if this this is not the case, there are at most $d_r^\alpha$ choices for a new edge around each of the endpoints of $e^*$. So there are at most $2d_r^\alpha$ choices for the second edge of a subtree $S$. Once a choice is made so that the underlying degree remains at most $d_r^\alpha$, there can be at most $d_r^\alpha$ new choices that can appear from adding a new edge. This means that after adding $l$ edges, there are at most $(l+2)d_r^\alpha$ possible choices for a new edge. Hence, there are at most $\prod_{l = 0}^{k-3} d_r^\alpha(l+2) = (k-1)!d_r^{\alpha(k-2)}$ subtrees in $S_r$ with underlying degree at most $d_r^\alpha.$ 

Now, each tree $S\in S_r$ with $U(S) > d_r^\alpha$ contains a node $v_i$, a non-leaf in $S$, with degree in $T$ equal to $U(S)$, meaning $d_i = U(S)$. Observe that all the nodes on the path connecting $v_i$ and $v_{r}$ have degrees at most $U(S) = d_i$ in $T$ by the definition of the underlying degree. Denote the distance between $v_i$ and $v_r$ with $l_i$, thus $1 \leqslant l_i \leqslant k-3$. By the same argument as in the one in the previous paragraph, there are at most $\frac{(k - 1)!}{l_i!}d_rd_i^{k - 2 - l_i}\leqslant \frac{(k-1)!}{l_i!}d_i^{k-2-l_i + 1/\alpha}$ such subtrees that contain both $v_r$ and $v_i$. Then
\[
\left|S_r\right| = \left|\bigsqcup_{u\geqslant 2}\{S\in S_r\colon U(S) = u\}\right| \leqslant (k-1)!d_r^{\alpha(k-2)} + (k-1)!\sum_{v_i}\frac{1}{l_i!}d_i^{k - 2 - l_i + \frac{1}{\alpha}},
\]
where the sum is taken over all nodes $v_i$ in $T_{r-1}$ at distance at most $k-3$ from $v_r$ with $d_i > d_r^{\alpha}$, and such that all the nodes on the path connecting $v_i$ to $v_r$ have degrees at most $d_i$. Summing over $r$ gives an upper bound on $R_k(T)$.

For each $i\in [m]$ we upper bound the sum of all the terms containing a power of $d_i$ that appear in the summation over $r$. It is easy to see that this sum is upper bounded by
\[
(k-1)!d_i^{\alpha(k-2)} + (k-1)!\sum_{l = 1}^{k-3}\frac{1}{l!}d_i^ld_i^{k- 2 -l + \frac{1}{\alpha}} \leqslant (k-1)!d_i^{\alpha(k-2)} + (e - 1)(k-1)!d_i^{k - 2 + \frac{1}{\alpha}}
\]
because in $T$ there are at most $d_i^l$ nodes at depth $l$ away from $v_i$ such that all the nodes on the path connecting them to $v_i$ have degrees at most $d_i$ (here $e$ denotes Euler's constant). To obtain the conclusion we set $\alpha(k-2) = k - 2 + \frac{1}{\alpha}$. 
\end{proof}

Remark that we found a bound on $R_k(T)$ with $k-2 < \epsilon < k-1$. This bound is the main ingredient in the proof of the first part of Theorem \ref{th: FPMS}.

Before we go into the proof of the theorem we reiterate the idea of gluing trees, used in [BL] to show that the set $\Delta_\mathcal{T}(k)$ is convex. 
Given two trees $S$ and $T$ we construct a new tree $S\boxtimes_k T$ by choosing a leaf in $S$ and a leaf in $T$, and unite them with a path that contains $k-1$ newly added nodes. Hence the distance between the two chosen leafs in $S\boxtimes_k T$ is $k$. This construction depends on the choice of leafs, but this will not be an impediment for our future work. 

\begin{lemma}
\label{lemma: gluing inequalities}
Let $S$ and $T$ be two trees. Then for all $1\leqslant i \leqslant N_k$
\[
c(T_i^k, S) + c(T_i^k, T) \leqslant c(T_i^k, S\boxtimes_k T) \leqslant c(T_i^k, S) + c(T_i^k, T) + (k-2)!D(S)^{k-3} + (k-2)!D(T)^{k-3} 
\]
\[
Z_k(S) + Z_k(T) \leqslant Z(S\boxtimes_k T) \leqslant Z(T_i^k, S) + Z(T_i^k, T) + (k-2)!D(S)^{k-3} + (k-2)!D(T)^{k-3}. 
\]
\end{lemma}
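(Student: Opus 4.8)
The plan is to prove the four inequalities (two lower bounds and two upper bounds) by carefully tracking which $k$-subtrees of $S\boxtimes_k T$ are ``new'' relative to those already present in $S$ and in $T$ separately. Recall that $S\boxtimes_k T$ is built by picking a leaf $s$ of $S$, a leaf $t$ of $T$, and joining them by a path $Q$ with $k-1$ internal new vertices, so that $\operatorname{dist}(s,t)=k$ in the new tree. The key structural observation is that any connected subtree of $S\boxtimes_k T$ with exactly $k$ vertices is entirely contained in $S$, entirely contained in $T$, or ``straddles'' the gluing path $Q$ in the sense that it uses at least one of the $k-1$ new vertices.

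For the lower bounds: a copy of $T_i^k$ living entirely inside $S$ is still a copy inside $S\boxtimes_k T$ (subtrees are preserved under taking supergraphs, since $S$ is an induced, indeed isometric on short scales, subtree attached only through the leaf $s$), and likewise for $T$; moreover these two families are disjoint because a $k$-vertex subtree cannot simultaneously contain a vertex of $S\setminus\{s\}$ and a vertex of $T\setminus\{t\}$ without using all $k-1$ internal vertices of $Q$ plus $s$ and $t$, which already is $k+1$ vertices. Summing $c(T_i^k,\cdot)$ over these disjoint families gives $c(T_i^k,S)+c(T_i^k,T)\le c(T_i^k,S\boxtimes_k T)$, and summing over $i$ (equivalently counting all $k$-subtrees) gives the $Z_k$ lower bound. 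Here I would be a little careful to note that a $k$-subtree of $S$ might touch $s$ and then be the ``same'' as one counted in $T$ only if it also reaches into $T$ — but by the vertex-count argument above that is impossible, so disjointness is clean.

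For the upper bounds: I need to bound the number of ``straddling'' $k$-subtrees, i.e. those using at least one internal vertex of $Q$. Such a subtree, being connected and of size $k$, intersected with $Q$ is a subpath; let $q$ be the endpoint of that subpath closest to $S$ (if it reaches $S$'s side) — then the part of the subtree hanging on the $S$-side is a subtree of $S\cup(\text{part of }Q)$ rooted at a vertex adjacent to $Q$, hence determined by choosing a subtree of $S$ of at most $k-2$ vertices containing a prescribed vertex near $s$; the number of such is at most $(k-2)!D(S)^{k-3}$ by the standard count (a subtree on $\le k-2$ vertices containing a fixed vertex, built edge by edge, gives at most $(k-3)!D^{k-3}$ rooted choices, and a harmless factor absorbs the root/ordering bookkeeping into $(k-2)!$). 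Symmetrically the $T$-side contributes at most $(k-2)!D(T)^{k-3}$. The main subtlety — and the step I expect to need the most care — is organizing the straddling count so it is genuinely bounded by the \emph{sum} $(k-2)!D(S)^{k-3}+(k-2)!D(T)^{k-3}$ rather than a product: one does this by observing that a straddling subtree reaches into at most one of $S$ or $T$ nontrivially in the sense that once you fix which internal $Q$-vertices are used, the remaining budget of $<k-2$ non-$Q$ vertices forces the ``hanging'' part to live essentially on one side, or splits as a small part on each side whose combined count is still dominated by the larger single-side bound; picking the combinatorial encoding (which vertex of $Q$ is the ``anchor'', and charging the whole subtree to whichever side it penetrates) so that each straddling subtree is counted once on the $S$ list or once on the $T$ list is the crux. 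Granting that, $c(T_i^k,S\boxtimes_k T)\le c(T_i^k,S)+c(T_i^k,T)+(k-2)!D(S)^{k-3}+(k-2)!D(T)^{k-3}$, and summing over $i$ yields the $Z_k$ upper bound with the identical error term. The exponents $k-3$ and factorial $(k-2)!$ should fall out naturally from this edge-by-edge growth argument, matching the statement.
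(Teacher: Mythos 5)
Your decomposition is the same as the paper's: copies inside $S$ and inside $T$ survive and are disjoint (no $k$-vertex subtree can meet both sides, since the $s$--$t$ path already has $k+1$ vertices), and the error term comes from bounding the ``straddling'' subtrees that use the new path vertices by an edge-by-edge growth count, with the extra factor $k-2$ over $(k-3)!D^{k-3}$ absorbing the choice of how far into the gluing path the subtree reaches; this is exactly the paper's $\sum_{i=0}^{k-3}(k-3-i)!D(S)^{k-3-i}+1\leq (k-2)!D(S)^{k-3}$ computation, which also uses that $s$ is a leaf so its unique neighbour is forced. Two points, both easily repaired. First, the step you flag as the ``crux'' and then grant is in fact immediate from the observation you already made for disjointness: a straddling $k$-subtree containing a vertex of $S$ and a vertex of $T$ would have to contain the whole path between them, hence at least $k+1$ vertices; so every straddling subtree lives entirely in $S\cup Q$ or in $T\cup Q$ and is charged to exactly one side, and the scenario you hedge against (a subtree splitting into small parts on both sides) simply cannot occur. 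Second, your closing move ``summing over $i$ yields the $Z_k$ upper bound with the identical error term'' is wrong as written: summing the per-type inequality over the $N_k$ types would multiply the error term by $N_k$. The correct observation --- which is also how the paper phrases it --- is that your straddling count bounds the total number of new $k$-subtrees of every isomorphism type at once, so it simultaneously gives the upper bound on each $c(T_i^k,S\boxtimes_k T)$ and on $Z_k(S\boxtimes_k T)$ with the same additive term, no summation over $i$ needed.
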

\begin{proof}
The left hand side inequalities are trivially true. We prove the upperbound on $Z_k(S\boxtimes_k T)$, the argument is based on similar ideas to the ones used in the proof of Proposition \ref{proposition: upperbound non-stars} and it will imply the other inequalities also. 

We need to upperbound the number of $k$-subtrees that are formed by attaching a new vertex to a leaf node of $S$. It is clear that such a tree contains the new vertex, the leaf and the parent of the leaf. Hence, we need to count in how many ways $k-3$ nodes can be chosen from $S$ to form a $k$-subtree. We do this iteritatively. At the first step there are at most $D(S)$ choices and after choosing each node, there can be at most $D(S)$ new choices. Then, at the second step there can be at most $2D(S)$ choices, at the third step $3D(S)$ and so forth. This means that there are at most $(k-3)!D(S)^{k-3}$ subtrees with $k$ nodes that contain the new vertex. By the same procedure we see that there are at most 
\[
\sum_{i = 0}^{k-3} (k-3 - i)!D(S)^{k - 3 - i} + 1 \leqslant (k-2)!D(S)^{k-3}
\]  
$k$-subtrees formed by the $k-1$ vertices added by the gluing procedure together with $S$'s nodes. A similar bound holds for $T$ and the upper bound for $Z_k(S\boxtimes_k T)$ follows. Remark that in this argument we upper-bounded the number of all $k$-subtrees that contain the new nodes, thus the other inequalities follow as well. 
\end{proof}

The next lemma will offer us a way to lower-bound the number of paths of the trees in a convergent sequence.

\begin{lemma}
\label{lemma: new sequence through gluing}
Let $(T_n)_{n\geqslant 1}$ be a sequence of trees such that $p^{(k)}(T_n) \rightarrow p$ and such that $D(T_n)$ is non-decreasing, and let \[T'_n = T_1\boxtimes_k T_2 \boxtimes_k ... \boxtimes_k T_n.\]
Then $p^{(k)}(T'_n) \rightarrow p$.
\end{lemma}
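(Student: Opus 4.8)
The plan is to show that the normalized $k$-profile is stable under the iterated gluing operation $\boxtimes_k$, by tracking how both the numerator counts $c(T_i^k, \cdot)$ and the normalizing constant $Z_k(\cdot)$ accumulate as we glue on one more tree at a time. The key quantitative input is Lemma \ref{lemma: gluing inequalities}: gluing $S$ and $T$ changes each subtree count by an additive error that is $O\!\left(D(S)^{k-3} + D(T)^{k-3}\right)$, which is negligible compared to $Z_k$ provided $Z_k(S\boxtimes_k T)$ grows at least like a positive power of $|S| + |T|$ --- and it does, since $Z_k$ of a tree on $N$ vertices is at least linear in $N$ (e.g., already the number of $k$-vertex paths is $\Omega(N)$ along a long path, and any tree on $N \geq k$ vertices has $Z_k \geq N - k + 1$).

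First I would set $T'_n = T_1 \boxtimes_k \cdots \boxtimes_k T_n$ and apply Lemma \ref{lemma: gluing inequalities} inductively. Writing $D_n = D(T_n)$, the lemma gives
\[
\left| c(T_i^k, T'_n) - \sum_{j=1}^n c(T_i^k, T_j) \right| \leq (k-2)! \sum_{j=1}^{n} \left( D(T'_{j-1})^{k-3} + D_j^{k-3} \right),
\]
and the same bound with $c$ replaced by $Z_k$. Since each $\boxtimes_k$ step adds only a path of $k-1$ new vertices of degree at most $2$, we have $D(T'_j) = \max_{l \leq j} D_l = D_j$ (using that $D(T_n)$ is non-decreasing and $k \geq 3$ so $D_l \geq 2$), so the error term is at most $2(k-2)! \sum_{j=1}^n D_j^{k-3}$. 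Meanwhile $Z_k(T'_n) \geq \sum_{j=1}^n Z_k(T_j)$, and since $p^{(k)}(T_n) \to p$ with the star coordinate accounting for a $\Theta(D_n^{k-1})$-sized chunk whenever $D_n$ is large --- more robustly, since $Z_k(T_j) \to \infty$ and in fact $Z_k(T_j) \geq |T_j| - k + 1 \to \infty$ while $Z_k(T_j) \geq \binom{D_j}{k-1}$ when $D_j \geq k-1$ --- one checks that $\sum_{j=1}^n D_j^{k-3} = o\!\left( \sum_{j=1}^n Z_k(T_j) \right)$. Indeed, term by term $D_j^{k-3} = o(Z_k(T_j))$: if $D_j \geq k-1$ then $D_j^{k-3}/\binom{D_j}{k-1} \to 0$, and if the $D_j$ stay bounded then $Z_k(T_j) \to \infty$ forces $|T_j| \to \infty$ and $Z_k(T_j) \geq |T_j| - k + 1 \to \infty$ while $D_j^{k-3}$ is bounded; a Stolz--Cesàro / dominated-ratio argument then upgrades this to the sums.

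Combining the two displays:
\[
p^{(k)}(T'_n)_i = \frac{c(T_i^k, T'_n)}{Z_k(T'_n)} = \frac{\sum_{j=1}^n c(T_i^k, T_j) + O\!\left(\sum_{j \leq n} D_j^{k-3}\right)}{\sum_{j=1}^n Z_k(T_j) + O\!\left(\sum_{j \leq n} D_j^{k-3}\right)}.
\]
Dividing numerator and denominator by $\sum_{j=1}^n Z_k(T_j)$, the error terms vanish in the limit, and $\frac{\sum_{j=1}^n c(T_i^k, T_j)}{\sum_{j=1}^n Z_k(T_j)}$ is a weighted average of the quantities $p^{(k)}(T_j)_i$ (with weights $Z_k(T_j) \to \infty$), hence converges to $p_i$ by Toeplitz's lemma / Stolz--Cesàro. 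Since $i$ was arbitrary, $p^{(k)}(T'_n) \to p$.

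The main obstacle is the bookkeeping that shows the gluing errors are asymptotically negligible, i.e. establishing $\sum_{j \leq n} D_j^{k-3} = o\!\left(\sum_{j \leq n} Z_k(T_j)\right)$; this is where the hypothesis $D(T_n)$ non-decreasing and the convergence $p^{(k)}(T_n) \to p$ (forcing $Z_k(T_n) \to \infty$, since $|T_n| \to \infty$ is not assumed but follows from convergence of a degenerate-if-finite profile... — more carefully, one should note $|T_n| \to \infty$ along the glued sequence regardless) both get used. Everything else is Toeplitz-lemma-style averaging, which is routine once the error estimates are in place.
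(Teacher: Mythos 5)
Your proof is correct and follows essentially the same route as the paper: both rest on the additive error bounds of Lemma \ref{lemma: gluing inequalities} together with a Stolz--Ces\`aro/Toeplitz averaging argument, the paper applying Stolz--Ces\`aro directly to the difference quotients $\bigl(c(T_i^k,T'_{n+1})-c(T_i^k,T'_n)\bigr)/\bigl(Z_k(T'_{n+1})-Z_k(T'_n)\bigr)$ while you telescope and average, which is the same mechanism. Your flagged reliance on $Z_k(T_n)\to\infty$ is shared by the paper's proof (which asserts it outright, justified by the context in which the lemma is applied), so it is not a gap relative to the paper.
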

\begin{proof}

Since $Z_k(T_n) \rightarrow \infty$, by the Stolz-Ces\`{a}ro theorem we know that
\[
\lim_{n\rightarrow \infty} \frac{c(T_i^k, T'_n)}{Z_k(T'_n)} = \lim_{n\rightarrow \infty} \frac{c(T_i^k, T'_{n+1}) - c(T_i^k, T'_n)}{Z_k(T'_{n+1}) - Z_k(T'_n)},
\]
if the right limit exists. By Lemma \ref{lemma: gluing inequalities} we know that
\[
\frac{c(T_i^k, T_{n+1})}{Z_k(T_{n+1}) + 2(k-2)!D(T_{n+1})^{k-3}}
\leqslant 
\frac{c(T_i^k, T'_{n+1}) - c(T_i^k, T'_n)}{Z_k(T'_{n+1}) - Z_k(T'_n)} 
\]
\[
\frac{c(T_i^k, T'_{n+1}) - c(T_i^k, T'_n)}{Z_k(T'_{n+1}) - Z_k(T'_n)}
\leqslant \frac{c(T_i^k, T_{n+1}) + 2(k-2)!D(T_{n+1})^{k-3}}{Z_k(T_{n+1})}
\]

Remark that $Z_k(T_{n+1}) \geqslant {D(T_{n+1})\choose k-1}$ (look at the number of $k$-stars). Therefore the quotient $D(T_{n+1})^{k-3}/Z_k(T_{n+1})\rightarrow 0$ as $n\rightarrow \infty$ and the conclusion follows.     
\end{proof}

\begin{proof2}

Let $T_n$ be a sequence of trees with $|T_n| \rightarrow \infty$ whose sequence of profiles converges to $p$. Denote $|T_n| = v_n$ and let $d_{n, 1}$, $d_{n, 2}$, ..., $d_{n,v_n}$ denote the degree sequence of $T_n$. 

We can assume that $D(T_n)$ is non-decreasing by restricting to a subsequence. Then, Lemma \ref{lemma: new sequence through gluing} allows us to assume that $\diam(T_n) \geqslant 2k$. It is not hard to check that in this case 
\[
P_k(T_n) \geqslant v_n - k.
\]

Let $0< \alpha < 1$ such that $\frac{\epsilon}{1 - \alpha} = k -1$. It is enough to prove that
\[
\frac{R_k(T_n)}{Z_k(T_n) + k} \leqslant e(k- 1)!(k-1)^\epsilon\left(\frac{P_k(T_n) + k}{Z_k(T_n) + k}\right)^\alpha, 
\]
which is equivalent to proving
\[
(R_k(T_n))^\frac{1}{1 - \alpha} \leqslant (e(k- 1)!)^{\frac{1}{1- \alpha}}(k-1)^{k-1} (P_k(T_n) + k)^\frac{\alpha}{1-\alpha}(Z_k(T_n) + k).
\]

Proposition \ref{proposition: upperbound non-stars} and Jensen's inequality imply
\begin{align*}
(R_k(T_n))^\frac{1}{1- \alpha} &\leqslant (e(k-1)!)^\frac{1}{1 - \alpha}\left(\sum_{i = 1}^{v_n}d_{n,i}^\epsilon\right)^\frac{1}{1 - \alpha} \leqslant (e(k-1)!)^\frac{1}{1 - \alpha} (v_n)^\frac{\alpha}{1 - \alpha}\sum_{i = 1}^{v_n}d_{n,i}^\frac{\epsilon}{1- \alpha}\\
&\leqslant (e(k-1)!)^\frac{1}{1 - \alpha} (P_k(T_n) + k)^\frac{\alpha}{1- \alpha} \sum_{i = 1}^{v_n}d_{n,i}^{k-1}\\ &\leqslant (e(k-1)!)^\frac{1}{1 - \alpha} (P_k(T_n) + k)^\frac{\alpha}{1- \alpha} (k-1)^{k-1}(P_k(T_n) + k + S_k(T_n))\\
&\leqslant (e(k-1)!)^\frac{1}{1 - \alpha} (P_k(T_n) + k)^\frac{\alpha}{1- \alpha} (k-1)^{k-1}(Z_k(T_n) + k),
\end{align*}
for the third inequality we used that $d^{k-1} \leq (k-1)^{k-1}{d \choose k-1}$ for all $d\geq k-1$. 
\end{proof2}

In the case $k = 5$ we can improve this result by improving the upper-bound on $R_5(T)$. 

\begin{proposition}
\label{prop: y bound}
Let $T$ be a tree with degree sequence $d_1$, $d_2$, ..., $d_n$. Then \[
Y(T) \leqslant 2\sum_{v = 1}^{n} (d_v)^{2 + \sqrt{3}}.
\]
In addition, $2+ \sqrt{3}$ is the optimal exponent.
\end{proposition}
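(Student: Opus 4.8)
The plan is to bound $Y(T)$ by a sum over vertices $v$ of the number of $5$-vertex wyes centered at $v$ (meaning $v$ is the degree-$3$ vertex of the wye), and then bound each such local count in terms of the degrees of $v$ and its neighbors. Recall from the proof of the proposition on $D\le 4$ that $Y(v) = \sum_{u\sim v}(d(u)-1)\binom{d(v)-1}{3}$. The difficulty is that this expression involves the neighbor degrees $d(u)$, so a crude termwise bound would give an exponent that is too large; instead I would split the contribution according to whether a neighbor $u$ has degree larger or smaller than some power $d(v)^\beta$, exactly in the spirit of the underlying-degree argument in Proposition \ref{proposition: upperbound non-stars}. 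Concretely, for each wye I charge it either to its center $v$ (when all neighbors appearing in it have degree at most $d(v)^\beta$) or to the unique high-degree neighbor $u$ with $d(u) > d(v)^\beta$ (the wye can use at most one such neighbor, since a wye centered at $v$ uses two neighbors $u,w$ and the other one contributes a factor $(d(w)-1)$ which we bound by $d(v)^\beta$).

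The next step is to carry out the two estimates. For the "low" part, $Y(v)$ restricted to neighbors of degree $\le d(v)^\beta$ is at most $d(v)^\beta \cdot d(v) \cdot \binom{d(v)-1}{3} = O(d(v)^{4+\beta})$ — wait, that is already too big, so I must be more careful: the right split is over the two neighbors used by the wye, and the $\binom{d(v)-1}{3}$ already counts three \emph{leaves-in-the-wye} at $v$, so actually a wye centered at $v$ is determined by a choice of $2$ neighbors plus $1$ extra branch at $v$; bounding the two chosen neighbors' surplus degrees each by $d(v)^\beta$ gives a count $O(d(v)^{2\beta}\cdot d(v)) = O(d(v)^{1+2\beta})$ for this portion. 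For the "high" part, a wye whose center is $v$ but which uses a neighbor $u$ with $d(u) > d(v)^\beta$ is charged to $u$; when we sum over all $v$ adjacent to a fixed $u$, the number of such $v$ is $d(u)$, each contributes a factor bounded using $d(v) \le d(u)^{1/\beta}$ for the branches at $v$, giving per-vertex-$u$ total $O(d(u) \cdot d(u)^{1/\beta} \cdot \text{(branches at }u)) = O(d(u)^{1 + 1/\beta + 1})$. Choosing $\beta$ so these exponents match, i.e. $1+2\beta = 2 + 1/\beta$, yields $2\beta^2 - \beta - 1 = 0$, so $\beta = 1$ — that gives exponent $3$, not $2+\sqrt3$, so the bookkeeping of which factors are free must be redone. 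The correct accounting treats the wye at $v$ as: the two neighbor-branches give factors $(d(u)-1)(d(w)-1)$ but only ONE of them, plus the $\binom{d(v)-1}{3}$ is $O(d(v)^3)$; splitting on the larger neighbor $u$ with threshold $d(v)^\beta$, the low part is $O(d(v)^{3+\beta})$ and the high part, summed over $v\sim u$, is $O(d(u)\cdot d(u)^{3/\beta})$ — here the $d(v)^3$ at the center becomes $d(u)^{3/\beta}$ and the sum over $v$ contributes $d(u)$. Matching $3 + \beta = 1 + 3/\beta$ gives $\beta^2 + 2\beta - 3 = 0$, so $\beta = 1$ again. To land on $\epsilon = 2 + \sqrt3$ one must instead only give up a \emph{linear} factor at the center: a wye centered at $v$ is determined by choosing one neighbor $u$ and then the "rest" inside the ball of radius... the careful version charges a wye to the endpoint of its path of maximum underlying degree, giving the quadratic $\beta^2 - 2\beta - ? $; I would set up the optimization so that the relevant equation is $\beta(1) = \cdots$, tracked so the solution is $1+\sqrt3$ and the resulting exponent is $2+\sqrt3$, mirroring exactly the relation $\alpha(k-2) = k-2+1/\alpha$ from Proposition \ref{proposition: upperbound non-stars} specialized with the structure of wyes.

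The final step is optimality: I would exhibit a family of trees for which $Y(T) = \Theta\big(\sum_v d_v^{2+\sqrt3}\big)$, necessarily non-attainable with a smaller exponent. The natural candidate is a millipede-type or two-level caterpillar where a "spine" vertex of degree $d$ has $\approx d^{\sqrt3 - 1}$ neighbors each of degree $\approx d^{?}$ chosen so that the count of wyes centered along the spine matches the claimed exponent; the degree sequence being a geometric cascade with ratio governed by $\beta = 1+\sqrt3$ is what forces the exponent, analogous to the "intriguing patterns for their degree sequence" mentioned in the abstract.

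\textbf{Main obstacle.} The hard part will be the bookkeeping in the dichotomy: making sure that each wye is charged exactly once, that the factor freed at the center versus the factor freed at the high-degree neighbor are accounted consistently (so the two exponents to be balanced are genuinely $2 + \beta$-type and $1 + 2/\beta$-type expressions whose balancing point is $1+\sqrt3$), and that the sum over all vertices telescopes with only a constant ($e$-like, here exactly $2$) multiplicative loss. Getting the exponent to be precisely $2+\sqrt3$ rather than $3$ hinges entirely on recognizing that one of the two neighbor-branch factors in $Y(v)$ can be absorbed "for free" by the path-orientation trick, and I expect verifying that subtlety to be where the real work lies.
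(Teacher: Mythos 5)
Your proposal has the right flavor---a dichotomy on neighbor degrees with a threshold that is a power of the center's degree, in the spirit of Proposition \ref{proposition: upperbound non-stars}---but the crucial estimate is never actually established, and the computations you do carry out come out wrong. Both balancing equations you derive ($1+2\beta=2+1/\beta$ and $3+\beta=1+3/\beta$) give $\beta=1$ and exponent $3$, which cannot be right, since the optimality half of the statement rules out any exponent below $2+\sqrt3$; and the repaired charging you sketch (charge the wye to its high-degree neighbor $u$ and then sum over the $d(u)$ vertices $v\sim u$ that may charge $u$) loses a factor of $d(u)$, leading to a balance of the form $3+\beta=2+2/\beta$, i.e.\ exponent $4$, which is weaker than claimed. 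The missing idea is precisely how to avoid that extra factor. The paper orients all edges away from a root and writes $Y(T)=\sum_{v}\sum_{u\in A(v)}\bigl[\binom{d_u-1}{2}(d_v-1)+\binom{d_v-1}{2}(d_u-1)\bigr]$, where $A(v)$ is the set of out-neighbors of $v$: each wye is counted exactly once, through the (oriented) edge joining its center to the middle vertex of its long leg. Then, for fixed $v$, one splits $A(v)$ according to whether $d_v>d_u^{\alpha}$: the part where $v$ dominates is at most $|A(v)|(d_v)^{2+1/\alpha}\le (d_v)^{3+1/\alpha}$, while the other part is at most $\sum_{u\in A(v)}(d_u)^{1+2\alpha}$---and, crucially, every non-root vertex lies in $A(v)$ for exactly one $v$, so summing over $v$ costs no additional degree factor. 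Balancing $3+1/\alpha=1+2\alpha$ gives $\alpha=(1+\sqrt3)/2$ and the exponent $2+\sqrt3$ with constant $2$. You allude to ``the path-orientation trick'' in your final sentence, but that is exactly the step you never set up, and without it your accounting does not reach $2+\sqrt3$.

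The optimality half is also only gestured at. The paper's witness is simple and explicit: a depth-$2$ tree whose root has degree $d$ and whose children each have degree $k=\lfloor d^{\alpha}\rfloor$, so that $Y\approx d^{3+\alpha}+d^{2+2\alpha}$ while $\sum_v d_v^{\epsilon}\approx d^{\epsilon}+d^{1+\epsilon\alpha}+d^{1+\alpha}$; demanding the inequality for every choice of $\alpha$ forces $\epsilon\ge 2+\sqrt3$. Your ``geometric cascade'' family is not specified precisely enough to run this exponent comparison, so as written neither half of the proposition is proved.
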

\begin{proof}
Let us consider $T$'s orientation obtained by pointing all the edges away from vertex $v_1$. Denote 
\[
A(v) = \{u\in V\colon (v,u) \in E \text{ and it points away from } v\}.
\]

It is easy to check that
\[
Y(T) = \sum_{v\in V}\sum_{u\in A(v)}{d_u - 1 \choose 2}(d_v - 1) + {d_v - 1 \choose 2}(d_u - 1). 
\]
Now, fix $v \in V$ and bound from above the sum
\[\sum_{u \in A(v)} {d_u - 1 \choose 2}(d_v - 1) + {d_v - 1 \choose 2}(d_u - 1) \leqslant \frac{1}{2}\sum_{u\in A(v)}(d_u)^2d_v + (d_v)^2d_u\]

We split the summation as follows. Let $A(v)^+ = \{u\in A(v)\colon d_v > (d_u)^{\alpha}\}$, where $\alpha > 1$ to be determined later, and let $A(v)^- = A(v)\setminus A(v)^+$. Also, let 
\begin{align*}
S_+ &= \frac{1}{2}\sum_{u\in A(v)^+} (d_u)^2d_v + (d_v)^2d_u \leqslant \frac{1}{2}\sum_{u\in A(v)^+} (d_v)^{\frac{2}{\alpha} + 1} + (d_v)^{2 + \frac{1}{\alpha}}\leqslant |A(v)^+|(d_v)^{2+\frac{1}{\alpha}} \leqslant (d_v)^{3 + \frac{1}{\alpha}}\\
S_- &= \frac{1}{2}\sum_{u\in A(v)^-}(d_u)^2d_v + (d_v)^2d_u \leqslant \frac{1}{2}\sum_{u\in A(v)^-} (d_u)^{2 + \alpha} + (d_u)^{1 + 2\alpha} \leqslant \sum_{u\in A(v)} (d_u)^{1 + 2\alpha},
\end{align*}
where we used $|A(v)^+|\leqslant |A(v)| \leqslant d_v$ and $A(v)^-\subset A(v)$.

To obtain the best exponent to bound both $S_+$ and $S_-$ we impose $3 + \frac{1}{\alpha} = 1 + 2\alpha$. Hence, $\alpha = \frac{1}{2}(1 + \sqrt{3})$. We obtain:
\[
\sum_{u \in A(v)} {d_u - 1 \choose 2}(d_v - 1) + {d_v - 1 \choose 2}(d_u - 1) \leqslant S_+ + S_- \leqslant (d_v)^{2 + \sqrt{3}} + \sum_{u\in A(v)} (d_u)^{2 + \sqrt{3}}. 
\]
Summing over $v\in V$ yields the first part of the proposition. 

Now, we prove the second part of the proposition. Consider a tree $T$ of depth $2$, rooted at a node $r$. Suppose the degree of $r$ is $d$ and each of $r$'s children has degree $k$. Then, it is immediate that
\[
Y(T) = d\left({d - 1\choose 2}(k-1) + {k-1 \choose 2}(d-1)\right)
\]
Suppose $k = \lfloor d^\alpha \rfloor$ for some $\alpha >0$ and suppose there exist $C, \epsilon >0$ such that: 
\[
Y(T) \leqslant C\sum_{v\in v} (d_v)^\epsilon = Cd^\epsilon + Cdk^\epsilon + Cdk \leqslant Cd^\epsilon + Cd^{1 + \epsilon\alpha} + Cd^{1 + \alpha}.
\]

Then, by letting $d$ tend to infinity we see that all the following inequalities must hold $3 + \alpha \leqslant \max{\{\epsilon, 1 + \epsilon\alpha, 1 + \alpha\}}$, and $2\alpha + 2 \leqslant \max{\{\epsilon, 1 + \epsilon\alpha, 1 + \alpha\}}$. In other words, there must be no $\alpha$ that violates these inequalities. Suppose the first one is violated. Then, $\alpha > \epsilon - 3$ and $\alpha < \frac{2}{\epsilon - 1}$. So if we want to be no $\alpha$ with these properties we need $\epsilon - 3 = \frac{2}{\epsilon - 1}$, which yields $\epsilon = 2 + \sqrt{3}$. 
\end{proof}  

\begin{proof3}
The argument is analogous to the one used in the proof of the first part of the theorem, but uses the stronger bound on $R_5(T)$ given by Proposition \ref{prop: y bound}. 
\end{proof3}

\subsection*{Acknowledgements}
K. E. and C. S. thank Jonathan Noel and Natasha Morrison for helpful discussions.

\bibliographystyle{plain}
\bibliography{BEMS14}
\end{document}